\newtheorem{prop}{Proposition}
\title{Phenotype divergence and cooperation in isogenic multicellularity and in cancer}
\author{Frank Ernesto Alvarez$^1$ \& Jean Clairambault$^2$\\
\small $^{1}$INSA Toulouse, France // orcid 0000-0002-6651-7374 \\
        \small $^{2}$Inria and LJLL, Sorbonne Universit\'e, Paris, France // orcid 0000-0002-8336-9641}
\date{\today}
\begin{document}
\sloppy
\maketitle

\begin{abstract}
We discuss the mathematical modelling of two of the main mechanisms which pushed forward the emergence of multicellularity: phenotype divergence in cell differentiation, and between-cell cooperation. In line with the atavistic theory of cancer, this disease being specific of multicellular animals, we set special emphasis on how both mechanisms appear to be reversed, however not totally impaired, rather hijacked, in tumour cell populations. Two settings are considered: the completely innovating, tinkering, situation of the emergence of multicellularity in the evolution of species, which we assume to be constrained by external pressure on the cell populations, and the completely planned - in the {\it body plan} - situation of the physiological construction of a developing multicellular animal from the zygote, or of  bet hedging in tumours, assumed to be of clonal formation, although the body plan is largely - but not completely - lost in its constituting cells. We show how cancer impacts these two settings and we sketch mathematical models for them. We present here our contribution to the question at stake with a background from biology, from mathematics, and from philosophy of science.

\end{abstract} \hspace{10pt}

{\it Keywords}. {Differentiation, cooperation, multicellularity, cancer disease, structured population models, philosophy of science}

\section{Biological and evolutionary-developmental background}

\subsection{Being or not teleological: the two settings considered}

Although this may seem completely trivial to state, let us emphasise that for us there is no such thing as teleology, i.e., orientation in a given direction or towards a given goal, in the general evolution of multicellular animals, which is constituted of a succession of haphazard strategic choices of adaptation to changing environments in existing evolutionary units, at one stage of evolution towards an identified next one. Such adaptations, often resulting in branchings of clades, as solutions to existential problems, imposed by external constraints as stresses~\cite{NedelcuMichodBioessays2020, WagneretalBioessays2019} induced by changes in the environment, are by no means unique, admitting that evolution proceeds by trials and errors, and by tinkering~\cite{JacobScience1977} from available material to solve such problems. We proposed in~\cite{ACC2022} a mathematical scheme to model the phenotypic divergence that may be a basis for such environmental stress-induced evolutionary steps. 

Conversely, teleology is of course present in the embryonic development of multicellular animals, which, according to Haeckel's formula ``Ontogeny recapitulates phylogeny''~\cite{Gould1977, Haeckel1866}, follows in each species the evolutionary choices made at each branching step of the evolution of species, leading from the fecundated egg (most frequent form of elementary material evolutionary unit in multicellular animals~\cite{WolpertSzathmaryNature2002}, those who are subject to cancer~\cite{Aktipis2015, NedelcuBST2020}) to adult animals with their completely differentiated cell types, following the {\it body plan}~\cite{Davidson1995, MulleretalIntRevCytol2004} characteristic of the species. From this holistic point of view, evolution of species is nothing but evolution of the body plan, evolution of genes and of gene regulatory networks being completely dependent upon this master regulator. We suggest here that understanding the cooperation principles that have been optimised (noting that an optimisation problem may have diverse solutions) at each developmental step may benefit from a close look at the mechanisms of the evolutionary steps that have determined the species body plan, and we sketch mathematical ways to achieve this task. 

One of the main difficulties in understanding and representing the design of the body plan is how to introduce mechanisms of coherence (for signals) and cohesion (for tissues) that make a multicellular organism stable and functional, with compatibility and cooperation between tissues and organs, and we are aware of the fact that such complete understanding still lies ahead of us. However, localised absence of coherence between tissues of an organism by lack of control on differentiations is precisely the main characteristic of cancer, the second and in our opinion resulting from the first one, being absence of control on proliferation~\cite{Bertolasobook2016}. We propose that evolution of cooperation between cells, that has been identified in tumours~\cite{ClearyNature2014, PolyakMarusykNature2014, TabassumPolyakNature2015}, is a reactivation of mechanisms present in the body plan that are still present, although chaotic, uncontrolled and doomed to fail at the level of the organism, in tumour cells, may rely on elementary evolutionary mechanisms that have been designed in the evolutionary past of their body plan, so that this point should be better understood to efficiently represent cooperation in tumours.

\subsection{The atavistic theory of cancer}
Recently popularised by physicists Paul Davies and Charles Lineweaver, together with oncologist Mark Vincent, the atavistic theory of cancer~\cite{DaviesLineweaver2011, LineweaveretalBioessays2014, LineweaverDavies2020, Lineweaveretal2021, VincentBioessays2011}, had in fact been envisioned already in 1996 by oncologist Lucien Israel~\cite{IsraelJTB1996}, and likely as early as 1914 by biologist Theodor Boveri~\cite{Boveri1914}, although none of these scientists seem to have been initially aware of the works of their predecessors. It helps us understand tumour progression and intratumoral organisation from a long-term evolutionary viewpoint. Briefly, it relies on the ideas that 1) all cancer cells are multicellular animal cells, results of a billion year-old evolution from unicellular organisms, and as such keep in their genomes powerful remnants of the organismic defence and construction mechanisms borne in their body plans (even if this term is not used by Davies and Lineweaver, they only mention their genomes); 2) tumours are results of a regression in the development of the organism, corresponding to early, incoherent versions of {\it ``an ancient genetic toolkit of pre-programmed behaviors''}, which we may freely identify as an unachieved evolutionary version of the species body plan, and which they name ``Metazoa 1.0''. The atavistic theory thus clearly states that a tumour is not just the result of some aberrant stochastic mutation in somatic cells (the somatic mutation theory, SMT, recently reviewed and compared to the atavistic theory in~\cite{LineweaverDavies2020}), but that it rather follows predictable paths in such regression towards a poorly organised, incoherent population of cells, nevertheless constituted of animal cells that are highly plastic (and thus resistant to external therapeutic pressure by anticancer drugs), as they have the power to differentiate and de-differentiate, and also to loosely cooperate between them in tumours. The works of David Goode and colleagues~\cite{Trigos2017, Trigos2018, Trigos2019, Trigos2023} have evidenced in cancer samples silencing of genes of multicellularity and compatibility between expression of genes of multicellularity and of unicellularity, resulting in escaping organismic control on cell differentiation (in other words, developing cell plasticity) and on proliferation, tending to a widely autonomic behaviour which is a characteristic of cells in tumour tissues.

The atavistic theory of cancer is little by little, as more evidence in the study of ancient genes becomes known and published~\cite{Trigos2017, Trigos2018, Trigos2019, Trigos2023}, gaining recognition among theorists of cancer biology, however still quite limited in the field of oncology, where people question its amenability to produce innovations in the therapeutics of cancer. Innovating theories may take a long time to reverse the argument of ``authority  of tradition''~\cite{BayleComete1682}. The present situation may remind us, {\it mutatis mutandis}, of the way geographers received in 1912 with much skepticism Alfred Wegener's theory of continental drift~\cite{Wegener1912}, until it was completely justified fifty years later by the theory of plate tectonics and progressively admitted by all geophysicists. A limitation to a wider acceptance of the atavistic theory is the present lack of sufficient evidence susceptible to convince biologists and philosophers of cancer, who prefer to keep on the ``safe'' side of science under development and, at least temporarily, reject it as not sufficiently relying on facts. Indeed, when it is mentioned in recent texts of philosophy of science - by authors who nevertheless must be commended for at least mentioning it -, the atavistic theory of cancer is not always correctly summed up, sometimes even presented in an off-hand way with arguments against it that show but partial understanding, as in~\cite{Pradeuetal2023}. A mere hypothesis, really? At least a uniting one in understanding cancer, fully compatible with the holistic point of view on evolution that we have mentioned above.

\subsection{Why and how does multicellularity fail in cancer?}

Cancer is thus, taking the atavistic theory of cancer for granted - although it tells us nothing about the very origin of the disease -, the progressive result of a failed maintenance of the teleological (or teleonomical, if one wants to explicitly exclude any intentionality, which is our position) construction of an animal. It may be described as essentially ``a deunification of the individual''~\cite{Pradeu2019}. In the perspective of evolved multicellularity, it is tempting to describe - an epistemological position we assume - such material construction at the level of genes and gene regulatory networks, initially not from the zygote, but from nonclonal colonies of cells (i.e., before the invention of the egg~\cite{WolpertSzathmaryNature2002} and of the {\it body plan} contained in it) in three successive steps.

At the first step, the colony level, exist only genes of the cell division cycle and cell death, likely by quorum sensing. At the second step are introduced genes coding for transcription factors and (unregulated) differentiation. At the third step appear genes coding for epigenetic regulations, the top level of fine local regulations, that are themselves subject to central regulations in higher-level animals such as bilaterians. Such hierarchy is remarkably found, in a reverse order, in the evolution in malignancy found in fresh blood samples of patients with acute myelogenous leukaemia~\cite{HirschetalNatureLett2016}, which induces us to propose a scenario for cancer progression as relying firstly on epigenetic gene alterations (which includes differentiation control), secondly on alterations in differentiation, and only very late on alterations in cell cycle regulations, which are the strongest basis of proliferation. Unfortunately so far, with the remarkable and recent exception of the successes of immunotherapy, cancer therapies target mainly this strength~\cite{LineweaveretalBioessays2014}.

Let us in the sequel consider the question of the dynamic behaviour, in an already constituted multicellular organism, of cancer cell lines as compared to healthy cell lines. As regards adaptation to changes in environmental pressure, for healthy and for cancer animal cell lines, representative cells of both types of lines that conserve in their genome for a very long past, at the time scale of Darwinian evolution, the same atavistic programme of the species body plan, the adaptative scenario is the same. It is indeed deterministic, however with easy bet hedging (resorting to atavistic adaptive varied scenarios that are normally repressed, fixed by cohesion rules, in cohesive multicellular animals) and very fast adaptation due to their high plasticity, in the case of cancer cells.

In the case of healthy cells, differentiation is strictly controlled for the sake of organismic cohesion, so that short-time, i.e., cell life-time, adaptation by non-genetic (epigenetic) ways is weak and slow (which is the same at short term). No time is left for genetic fixation of adaptive traits in healthy cells (which is not the case at the billion-year time scale of Darwinian evolution), so that healthy cell lines may be considered as evolutionary stable in an organism life-time perspective.

In the case of the very plastic cancer cells - due to poor control on their differentiation, in our and in Marta Bertolaso's view \cite{Bertolasobook2016}, one main cause of cancer -, adaptation is on the contrary fast in a life-time perspective, and so is mutational genetic fixation due to poor control on cell cycle gating. Indeed, in the cancer case, added to the deterministic and atavistic basis of the body plan with added bet hedging, may come stochasticity (e.g., due to error-prone DNA  polymerases, {\it mutatis mutandis} as shown in starving bacteria \cite{kivi}), and poor control on the quality of DNA in the cell division cycle checkpoints, inducing the high mutation rate observed in cancer lines as compared with healthy cell lines.

What are the respective parts of determinism and stochasticity in the evolutionary capacities of cancer cell lines remains to be determined. In this respect, it is noteworthy that according to Marta Bertolaso, poor control on differentiation and on proliferation in a parallel way - or is it consequential? Indeed, cellular stress resulting from alterations of control on differentiation might be a cause of poor control on proliferation as mentioned above about bacteria \cite{kivi} - are the two main traits of cancer cells.

In other words, both the epigenetic deterministic scenario of the body plan relying on differentiations in isogenic cells, however poorly controlled and inefficient in producing cohesion, and the genetic stochastic scenario of Darwinian evolution by gene mutation, however with a tremendously enhanced speed of `economic' genetic fixation at a cell lifetime scale, after first and costly - in terms of the energetic cell machinery - epigenetic adaptation, are concerned in the dynamic behaviour of cancer cell populations.

\subsection{A narrative of long-term evolution and cancer, freely exposed to the fire of philosophy of science}

We need not justify any given evolutionary path that led to such and such animal, and rather see paths followed in evolution as diverse evolutionary strategies adapted to external constraints that imposed changes on the behaviour of the actors of the evolutionary paths at stake. Let us mention here that we hold, from our point of view, which resorts to functional, physiological and anatomical evolution, these actors, or evolutionary units, to be the {\it body plans}~\cite{Davidson1995, MulleretalIntRevCytol2004} of multicellular animals, and not the individual genes, nor the gene regulatory networks that are mere effectors of evolutionary strategies, not determinants, and are only secondarily affected by them, as reflected in observations. A paleoanthropological analogy in evolution, {\it mutatis mutandis}, of such strategies at the level of divergence from a common ancestor in the Hominin lineage between Paranthropus and early Homo, relying on different dietary choices, may be found in~\cite{BalteretalNatureLett2012}. Such haphazard strategical choices in long-term, Darwinian, evolution, that have become fixed in the body plan of animal species by genetic mutations and success in species fitness, may fail in cancer, as described in the previous section.

These firstly non determined (tinkered~\cite{JacobScience1977}) strategies led to epigenetic modifications (aka epimutations), later to fixed mutations of the genes coding for the epigenetic enzymes that determine these epigenetically defined strategies yielding functional body plans, that are the bases of physiology and anatomy construction in multicellular animals. Cancer cannot change the body plan of an animal in that of another animal, and it is certainly not a new form of life. However, by loss of organismic control on differentiations, it can reverse a cohesive body plan in a given species to some intermediate, poorly defined, unachieved form of the body plan of this species, yielding a collection of still very plastic cells, in other words a tumour, or a Metazoan 1.0 in the words of the atavistic theory of cancer~\cite{DaviesLineweaver2011}. The causes of such loss of control on differentiations are unknown, and the atavistic theory tells us nothing about them. However they may consist of an abrupt change in the environmental pressure on the tissue at stake, but also may be identified as due to a mutation in the genes responsible for epigenetic control~\cite{HirschetalNatureLett2016}.

\section{Cell differentiation and phenotype divergence}

\subsection{Heterogeneity and plasticity with respect to what?}
Cell populations, healthy and cancer, are heterogeneous w.r.t. various {\it continuous} traits under study, that are used to describe their biological variability, such as cell size, age in the cell division cycle, expression of genes of drug resistance, or more functional and abstract traits determining cell population fate such as viability, fecundity, motility, plasticity, according to the biological question at stake. Plasticity~\cite{JCISMCO2020, JCECC2023} in a given trait is its capacity to change under the pressure of external constraints, such as drugs, and it has long been recognised as as relying on epigenetic factors~\cite{McCullough1998}. Plasticity may be considered as a speed of evolution from one trait distribution to another one when the surrounding environment of the cell population changes, slowly or abruptly. Such evolution may be accelerated in equations by terms of advection (especially when abrupt changes in the environment force the cell population to adapt quickly) and diffusion  (representing uncertainty in phenotype determination).

Differentiation in cell lineages, such as the ones constituting the paths of haematopoiesis, may consist either of simple maturation, following the same line towards a terminally differentiated cell type, such as the different granulocytes (neutrophils, eosinophils, and basophils) among white blood cells, or of branching, e.g., in haematopoiesis from pluripotent haematopoietic stem cells to myeloid versus lymphoid progenitors. Phenotype divergence is the biological phenomenon by which branching occurs between precursors of terminally differentiated cell types. The first identified phenomenon relying on phenotype divergence in evolution from unicellularity towards multicellularity was likely the separation between germinal cells (the germen) and germen-supporting somatic cells (the soma), proposed in 1892 by August Weismann~\cite{Weismann1892} and later mentioned by John Maynard Smith and E\"ors Szathm\'ary as the first step from unicellularity towards multicellularity, one of the major transitions in evolution~\cite{MaynardSmithSzathmary1995}. Basis of heterogeneity in cell populations within a cohesive multicellular individual, or within a tumour, phenotype divergence necessarily relies on phenotype plasticity, and it is the phenomenon we here tackle to represent in phenotype-structured equations. 

\subsection{Long-term evolution as genetic adaptation of the body plan in animals}

As mentioned in the introduction, we consider that the fundamental evolutionary unit in the great Darwinian evolution of animals is the body plan~\cite{Davidson1995, MulleretalIntRevCytol2004}, which is virtually (as it is abstract, indeed as a plan, self-developing, written as a self-extracting archive in genetic code, its dynamic extraction occurring continuously during the process of animal development) present in every physiologically complete nucleated animal cell, starting from the zygote, i.e., the initial fecundated egg. The genes and gene regulatory networks that materially proceed from it and serve to design and cohesively maintain the construction of the animal when it is achieved, are its observable materialisation.

Anatomically in 3D observations, physiologically by the observation of the great functions of the organism, and genetically by investigation the genes that have been identified (e.g., by KO experiments) in different species to correspond to anatomic structures and physiological functions, and their expression, we may have access to material reflections of the body plan, and thus partially reconstitute its evolution across species. This is precisely what has been investigated about the genes at the origin of multicellularity and their correspondence with the genes that are altered in cancer by Domazet-Lo\v so and Tautz~\cite{domazet2008, domazet2010}, and later by Trigos {\it et al.}~\cite{Trigos2017, Trigos2018, Trigos2019, Trigos2023} in David Goode's team, giving rise and genetic arguments to the atavistic theory of cancer~\cite{DaviesLineweaver2011, LineweaveretalBioessays2014, LineweaverDavies2020}.

\subsection{A nonlocal phenotype-structured cell population model}

The reaction-diffusion-advection model proposed in~\cite{ACC2022} to exemplify {\it bet hedging} as a `tumour strategy' to diversify its phenotypes in response to deadly stress (e.g., by cytotoxic drugs), but also to represent phenotypic divergence in evolution towards multicellularity, runs as follows.\\
\medskip

Let $D=\Omega\times[0,1]$, where $\Omega:=\{C(x,y)\leqslant K\}$ (a constraint between competing traits $x$ and $y$) and $\theta\in[0, 1]$. The evolution with time $t$ of a plastic cell population of density $n(z,t)$ structured in a 3D phenotype $z=(x,y,\theta)$, where $x$=viability, $y$=fecundity, $\theta$=plasticity, with $r(z)$ and $d(z)$ growth and death rates, is given by

\begin{equation}
    \partial_t n+\nabla\cdot\Big(Vn-A(\theta)\nabla n\Big) = (r(z)-d(z)\rho(t))n,\\ \label{PheD}
 \end{equation}
 \begin{equation}
   \text{with} \;(Vn-A(\theta)\nabla n\Big)\cdot\mathbf{n}=0\mbox{ for all }z\in\partial D\;\;
   \text{($\mathbf{n}$ is a normal vector to $\partial D$)},\;\;n(0,z)=n_0(z)\mbox{ for all }z\in D,\mbox{ } \nonumber
\end{equation} where $\Omega=\{(x,y)\in[0,1]^2:(x-1)^2+(y-1)^2>1\}$, and the diffusion matrix is\\

$A(\theta)=\begin{pmatrix}
a_{11}(\theta)&0&0\\
0&a_{22}(\theta)&0\\
0&0&a_{33}
\end{pmatrix},$
with $a_{11}$ and $a_{22}$ non-decreasing functions of $\theta$,
influencing the speed at which non-genetic epimutations occur, otherwise said, it is a representation of how the internal plasticity trait $\theta$ affects the non-genetic instability of traits $x$ and $y$, by tuning the diffusion term $\nabla\cdot\{A(\theta)\nabla n\}$; the advection term
$$
\nabla\cdot \{V(t,z)n\}=\nabla\cdot\{(V_1(t,z),V_2(t,z),V_3(t,z))n\}
$$
represents the cellular stress exerted on the population by external evolutionary pressure, i.e., by changes in the cell population environment, here chosen as tearing apart the cell population between competing traits $x$ (viability) and $y$ (fecundity); and $\rho(t)=\displaystyle\int\limits_{D}n(t,z)dz$
stands for the total mass \vspace{-.1cm}of individuals in the cell population at time \vspace{.2cm}$t$.\\
\bigskip

The existence and uniqueness of solutions is obtained in finite time in a constructive way by using the compactness of a sequence of numerical solutions, which are the result of the algorithms used to discretise the model. Simulations may be obtained with instances of the functions used in the equations. For instance, to obtain phenotypic divergence (which we take as the basis of both bet hedging in cancer and of emergence of multicellularity in evolution), we consider 
over the domain $D=\Omega\times[0,1]$
an initial density given by
\[
n_0(z)=a\mathds{1}_{\{f(z)<1\}}e^{-\frac{1}{1-f(z)}},
\]
with $f(z)=\frac{\|z-z_0\|^2}{(0.025)^2}$, where $z_0=(0.25,0.25,0.5)$ and $\|\cdot\|$ is the euclidean norm. We choose the value of $a$ in such a way that $\rho_0=\int_{D}n_0(z)=1$.\\
We set the growth rate and the death rate as
\begin{align*}
    r(x,y,\theta) =\mathds{1}_{\{y>x\}}e^{-(0.1-x)^2-(0.9-y)^2}+\mathds{1}_{\{x\geqslant y\}}e^{-(0.1-y)^2-(0.9-x)^2},\;\;\;
    d(x,y,\theta) =\frac{1}{2}.
\end{align*}
This choice of growth and death rate is meant to represent the fact that different configurations of traits can be equally fitted, even to the point where the survival rate can be maximised in multiple ways.\\
We choose the diffusion matrix
\[
A(\theta)=\begin{pmatrix}
(\theta+1)10^{-6}&0&0\\
0&(\theta+1)10^{-6}&0\\
0&0&10^{-6}
\end{pmatrix},
\] 
so a higher plasticity will directly imply a higher mutation rate for the other two traits; and the advection term, tearing apart traits $x$ and $y$, is chosen as $V(t,z)\!=\!10^{-3}(-y,-x,-(x+y))$, or $10^{-3}\theta(-y,-x,-(x+y))$ if we want plasticity $\theta$ to impinge also on the advection term, representing in all cases the influence of the tumour ecosystem on the tumour cell population.\\
\begin{figure}[H]
    \centering
    \hspace{-2cm}\includegraphics[width=15cm]{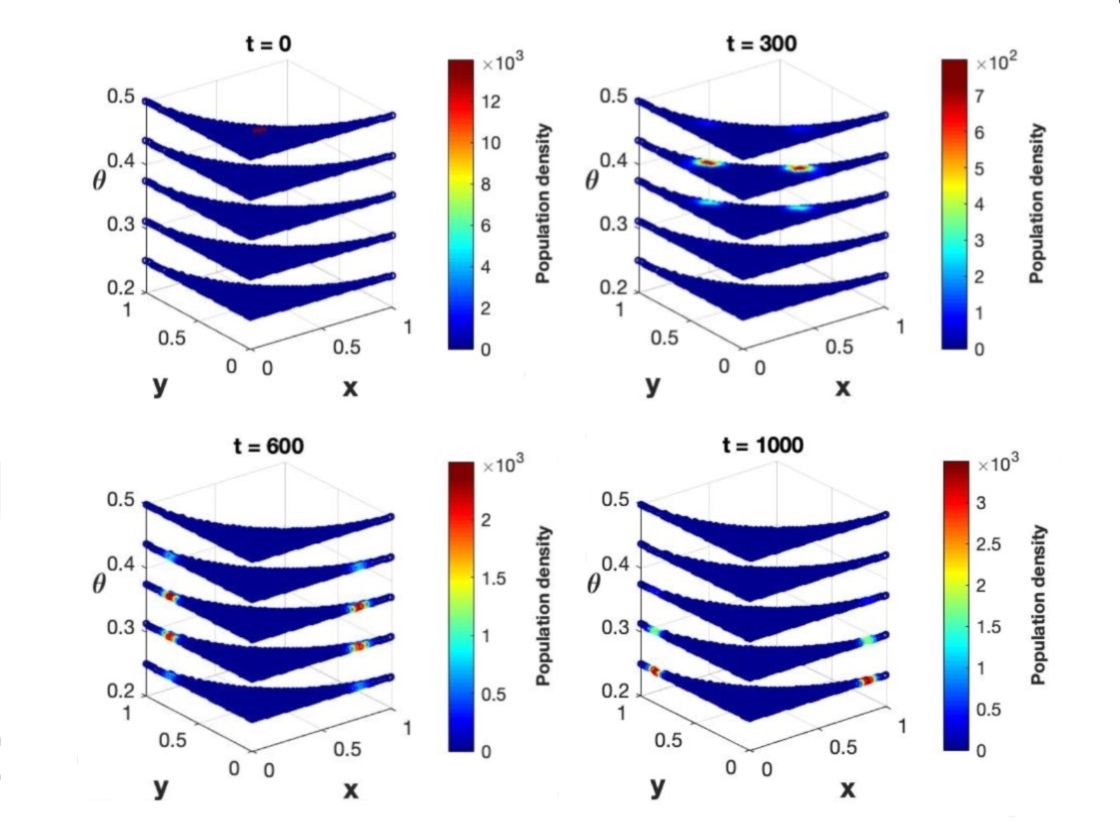}
    \caption{Phenotype divergence and loss of plasticity. On these cartoon-like figures, one can follow the progressive distancing of an initial cell population arbitrarily set at $z=(0.25, 0.25, 0.5)$, submitted to an advection gradient that tends to split the cell population into two subpopulations migrating towards the two extreme points $(0,1)$ and $(1,0)$ of the domain $\Omega$, while the plasticity variable $\theta$ decreases towards $0$.}
    \label{fig1}
\end{figure}
The reader is sent to~\cite{ACC2022} for more detailed explanations and illustrations showcasing other biological strategies and theoretical results being replicated by means of numerical simulations.

\subsection{What this model tackles and what it leaves unexplained}

Our reaction-diffusion-advection equations give the most important part in modelling phenotype divergence to the drift (advection) term representing environmental pressure from the ecosystem towards separation of phenotypes. Plasticity is naturally already present in the reaction term of this continuous phenotype-structured cell population model of adaptive dynamics, and the diffusion term adds to phenotype adaptability by uncertainty in its determination. Nevertheless, the sensitivity of phenotype adaptation and the trade-off we set between the supposed contradictory 1D phenotypes is mainly represented by the advection term and the bounded region within which the phenotypes evolve, that together represent constraints and offer possibilities of trade-offs between the phenotypes. 

This model is clearly a mathematical abstraction that may be applied as such to every possible branching situation in the physiological development of multicellular animals or in bet hedging of phenotypes in tumours. For instance, one could model more precisely in glioblastoma cells such branching situations as the ``go-or-grow'' alternative between enhancing a proliferation potential (fecundity) and a motion potential (motility)~\cite{Hatzikirou2010}, which would need to represent in the same kind of model the biological mechanisms that account for them, and about the constraints (likely of energetic nature) between them. This would help us design more precisely the advection term and the domain in phenotype space within which phenotypes evolve.  It would imply  efficient transdisciplinary collaboration on this subject between mathematicians and biologists of cancer, which we hope to develop in the future.

\section{Cooperation}

\subsection{Tinkered cooperation in the emergence of multicellularity vs. directed cooperation in constituted multicellular animals}

Noting that the question of cooperation and of division of labour has been considered by many authors at different stages of associations between individuals, including animal societies~\cite{MaynardSmithSzathmary1995}. To follow again the metaphor of the separation in evolution between Paranthropus and early Homo, the situation with respect to phenotype divergence between body plans of animals is as if, {\it mutatis mutandis}, in evolution from their common hominin ancestor,  Paranthropus and early Homo, after their genetic separation starting by fixation of initial epigenetic haphazard strategic adaptive choices (since evolution under changes in environmental pressure proceeds by tinkering~\cite{JacobScience1977}), had found interest in developing mutualistic interactions, living in symbiosis, less and less independently of one another. However, since the Paranthropus species eventually became extinct, likely due to climate changes incompatible with his too specialised vegetalian diet, whereas Homo survived, having adapted his diet to meat eating, this was actually not the case, or not in a permanent way, in the evolution of hominins.

We are aware of the fact that this metaphor is by no means perfect, and that reversible development, of epigenetic nature, within an isogenic individual (or a tumour) is not the same process as evolution of species, which is based on fixed, irreversible, genetic separations by branchings. Nevertheless, hypothesising that genetic specialisation is likely to begin with reversible epigenetic phenotype divergence before being fixed by gene mutations, we hope that it sheds some light on the processes that are at work in elementary steps in the evolution towards multicellularity and in bet hedging in tumours.

Cooperation between populations of cells resulting from such phenotype divergence may be considered as the glue that holds together all cell subpopulations in an isogenic multicellular organism. It may occur when mutualistic interactions are beneficial for all the interacting cell populations, provided that none of them becomes extinct. And it may also not occur, in which case no trace of such missed mutualism is found in the evolution of body plans. It is indeed, in our representation, the body plan that has kept memory, in each species, in constitutive intercellular gene regulatory networks, of the proper strategic choices w.r.t. phenotype divergences that lead to the design of an anatomically and physiologically cohesive animal. No tinkering is present anymore in these programmed choices designed in the body plan, and this is what we would like to represent now.

We will present two different possible approaches to the study of evolution of cooperation. The first one takes the prisoner's dilemma as a starting point, and considers reciprocity as a factor influencing the strategies of both players. The possible outcomes for a long running game are studied, and finally, a way to model a scenario with $n$ players is described. The second modelling choice is through an integro-differential system structured according to the probability of cooperation. In this case, reciprocity is represented by an advective term. For a simple set of hypotheses we show that cooperation might mark the difference between extinction or proliferation for two interacting populations.

\subsection{Prisoner's dilemma and reciprocity}\label{Toy}
According to \cite{Axelrod}, an initial intention for cooperation and the existence of reciprocity are crucial for the evolution of cooperation, even in an environment composed of egoistic individuals. However, one may wonder what are the conditions that guarantee this to be true; after all, it can be expected that, if reciprocity is stronger in the absence of cooperation, then cooperation becomes less usual. In other words: when is reciprocity a catalyst for cooperation? The following (very simple) model tackles this question.\\
Consider two players (that can range from cells to entire groups of individuals, such as governments) involved in the repeated prisoner's dilemma game. Player $A$ will initially cooperate with probability $p_0>0$ while player $B$ will do so with probability $q_0>0$. We assume both values to be strictly positive to account for the initial intention of cooperation described in \cite{Axelrod}. Both players will modify their probabilities of cooperation at turn $k+1$ (denoted as $p_{k+1}$ and $q_{k+1}$ respectively) by following the rule:
\[
p_{k+1}=
\left\{\begin{matrix}p_{k}+\varepsilon_{11}(1-p_k),& \mbox{ if player B cooperated in turn }$k$,\\
&\\
p_{k}(1-\varepsilon_{12}),& \mbox{ if not},
\end{matrix}\right.
\]
and
\[
q_{k+1}=
\left\{\begin{matrix}q_{k}+\varepsilon_{21}(1-q_k),& \mbox{ if player A cooperated in turn }$k$,\\
&\\
q_{k}(1-\varepsilon_{22}),& \mbox{ if not},
\end{matrix}\right.
\]
where $0<\varepsilon_{ij}<1$ for $i,j\in\{1,2\}$. According to this model, both players modify their strategy by ``learning'' from each other. A different strategy was already studied in \cite{Murase}, where players could modify their strategy by imitation.\\
We recall that the payoff matrix of the prisoner's dilemma game is given by
\[
\begin{pmatrix}
b-c&-c\\
b&0
\end{pmatrix},
\]
where $b$ is the benefit and $c$ is the cost of cooperation ($b>c$). Hence, the expected gain for players $A$ and $B$ at turn $k$ are given by
\[
E_A^k=(b-c)p_kq_k+b(1-p_k)q_k-cp_k(1-q_k)=bq_k-cp_k\mbox{ and }E_B^k=bp_k-cq_k,
\]
respectively. Therefore, the average expected gain at turn $k$ is given by the relation
\[
E_k=\frac{(b-c)}{2}(p_k+q_k).
\]
Given that the probability of both players cooperating at turn $k$ is equal to $p_kq_k$, our interest falls then on the question: What are the conditions over the values $\varepsilon_{ij}$, $i,j\in\{1,2\}$, such that the sequence $(p_k,q_k)$ converges towards a non trivial limit ? In such cases, when does the average expected gain can be expected to increase ? \\

In order to answer these questions we first explicitly give the values of $p_{k+1}$ and $q_{k+1}$ as functions of $p_k$ and $q_k$. Thanks to the law of total probability, we get the relations
\begin{align*}
    p_{k+1}&=q_k(p_{k}+\varepsilon_{11}(1-p_k))+(1-q_k)p_{k}(1-\varepsilon_{12})\\
    &=(1-\varepsilon_{12})p_k+\varepsilon_{11}q_k+(\varepsilon_{12}-\varepsilon_{11})p_kq_k=:f_1(p_k,q_k),\\
    q_{k+1}&=p_k(q_k+\varepsilon_{21}(1-q_k))+(1-p_k)q_k(1-\varepsilon_{22})
    \\
    &=(1-\varepsilon_{22})q_k+\varepsilon_{21}p_k+(\varepsilon_{22}-\varepsilon_{21})p_kq_k=:f_2(p_k,q_k).
\end{align*}
If this sequence has a limit $(p^*,q^*)$, it must satisfy the relation 
\begin{equation}
\left\{
\begin{matrix}
    p^*&=&f_1(p^*,q^*),\\
    &&\\
    q^*&=&f_2(p^*,q^*).
\end{matrix}
\right.\label{Invariant}
\end{equation}
In the following proposition we will identify the possible values for $(p^*,q^*)$ and determine their stability.

\begin{prop}\label{Stability}
Consider a couple $(p_0,q_0)$ and the value $e=\varepsilon_{11}\varepsilon_{21}-\varepsilon_{12}\varepsilon_{22}$. 
\begin{itemize}
    \item[i)] If $e<0$, then the only possible values for $(p^*,q^*)$ are $(0,0)$ and $(1,1)$. The first one is a stable fixed point and the second one is an unstable fixed point.
    \item[ii)] If $e>0$, then the only possible values for $(p^*,q^*)$ are $(0,0)$ and $(1,1)$. The first one is an unstable fixed point and the second one is an stable fixed point.
    \item[iii)] If $e=0$ then $(p^*,q^*)$ is the unique solution of
    \begin{align*}
    \varepsilon_{22}p_0+\varepsilon_{11}q_0&=\varepsilon_{22}p^*+\varepsilon_{11}q^*,\\
    q^*&=\frac{\varepsilon_{12}p^*}{\varepsilon_{11}+(\varepsilon_{12}-\varepsilon_{11})p^*},
\end{align*}
and it is a stable fixed point.
\end{itemize}
\end{prop}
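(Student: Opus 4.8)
The plan is to read \eqref{Invariant} as a fixed-point problem for the explicit quadratic map $(f_1,f_2)$ and to split the analysis by the sign of $e$: the Jacobian settles the hyperbolic cases $e\neq 0$, while a conserved quantity handles the degenerate case $e=0$.

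First I would locate all fixed points. Rewriting the two equations of \eqref{Invariant} as $\varepsilon_{12}p^*(1-q^*)=\varepsilon_{11}q^*(1-p^*)$ and $\varepsilon_{22}q^*(1-p^*)=\varepsilon_{21}p^*(1-q^*)$, I note that any interior fixed point ($0<p^*,q^*<1$) must satisfy, after dividing, $\frac{p^*(1-q^*)}{q^*(1-p^*)}=\frac{\varepsilon_{11}}{\varepsilon_{12}}$ and $\frac{q^*(1-p^*)}{p^*(1-q^*)}=\frac{\varepsilon_{21}}{\varepsilon_{22}}$; multiplying these forces $\varepsilon_{11}\varepsilon_{21}=\varepsilon_{12}\varepsilon_{22}$, i.e. $e=0$. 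Hence for $e\neq 0$ no interior fixed point exists, and inspecting the four edges of $\partial([0,1]^2)$ (using $\varepsilon_{ij}>0$) shows that on the boundary only $(0,0)$ and $(1,1)$ survive. This gives the ``only possible values'' claims in i) and ii).

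Next, for the stability in i) and ii) I linearise. The Jacobian at $(0,0)$ is $\left(\begin{smallmatrix}1-\varepsilon_{12}&\varepsilon_{11}\\\varepsilon_{21}&1-\varepsilon_{22}\end{smallmatrix}\right)$ and at $(1,1)$ it is $\left(\begin{smallmatrix}1-\varepsilon_{11}&\varepsilon_{12}\\\varepsilon_{22}&1-\varepsilon_{21}\end{smallmatrix}\right)$; writing a generic such matrix as $\left(\begin{smallmatrix}a&b\\c&d\end{smallmatrix}\right)$, the discriminant $(a+d)^2-4(ad-bc)=(a-d)^2+4bc$ is positive (as $bc>0$), so $\lambda_\pm$ are real, and one checks $\lambda_-\in(-1,1)$ always, so stability is governed by $\lambda_+$. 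The clean step is that $\lambda_+<1$, i.e. $\sqrt{(a-d)^2+4bc}<2-(a+d)$ (whose right side is positive here), is equivalent after squaring to $bc<(1-a)(1-d)$; at $(0,0)$ this reads $\varepsilon_{11}\varepsilon_{21}<\varepsilon_{12}\varepsilon_{22}$, i.e. $e<0$, and at $(1,1)$ it reads $\varepsilon_{12}\varepsilon_{22}<\varepsilon_{11}\varepsilon_{21}$, i.e. $e>0$. Since both points are hyperbolic for $e\neq 0$, linear stability is conclusive and yields exactly the assignments of i) and ii).

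Finally, for iii) the fixed points are non-hyperbolic ($\lambda_+=1$), so I replace linearisation by a conserved quantity. A short computation shows that the coefficient of $p_kq_k$ in $\varepsilon_{22}p_{k+1}+\varepsilon_{11}q_{k+1}$ equals $-e$, so when $e=0$ the quantity $L_k:=\varepsilon_{22}p_k+\varepsilon_{11}q_k$ is exactly invariant; this is precisely the first equation of iii), the second being the fixed-point relation derived above. Each orbit thus stays on a segment $\{\varepsilon_{22}p+\varepsilon_{11}q=L_0\}\cap[0,1]^2$, on which the dynamics reduce to a one-dimensional map $F$; since $L$ is globally conserved, $(\varepsilon_{11},-\varepsilon_{22})$ is an eigenvector of the Jacobian at every point of the line, and $F'$ equals its eigenvalue, namely $\det J$. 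At $e=0$ one computes that $\det J(p,q)=(1-\varepsilon_{12}-\varepsilon_{22})+(\varepsilon_{22}-\varepsilon_{21})p+(\varepsilon_{12}-\varepsilon_{11})q$ is affine, so its extrema over the square occur at corners, where its values $1-\varepsilon_{12}-\varepsilon_{22}$, $1-\varepsilon_{11}-\varepsilon_{21}$, $1-\varepsilon_{12}-\varepsilon_{21}$, $1-\varepsilon_{11}-\varepsilon_{22}$ all lie in $(-1,1)$; hence $|F'|<1$ on the whole segment, $F$ is a contraction, and every orbit converges to the unique fixed point. I expect the degenerate case $e=0$ to be the main obstacle, since linearisation is inconclusive there: the crux is spotting $L_k$ (which both pins the limit to $(p_0,q_0)$ and reduces the problem to one dimension) and then exploiting the fortunate affinity of $\det J$ to obtain a genuine contraction. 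A secondary point worth flagging is that i) and ii) as stated concern local stability, for which the Jacobian suffices; determining the full basins of attraction would require a bit more, e.g. a monotonicity argument for $(f_1,f_2)$ on $[0,1]^2$.
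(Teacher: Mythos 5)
Your proof is correct, and it follows the same overall architecture as the paper's (linearisation at $(0,0)$ and $(1,1)$ when $e\neq 0$; the conserved quantity $\varepsilon_{22}p_k+\varepsilon_{11}q_k$ plus a one-dimensional reduction when $e=0$), but two of your key steps are genuinely different and in fact sharper. First, where the paper merely asserts that ``a simple computation shows'' there are no non-trivial fixed points when $e\neq 0$, your factorisation $\varepsilon_{12}p^*(1-q^*)=\varepsilon_{11}q^*(1-p^*)$ and $\varepsilon_{22}q^*(1-p^*)=\varepsilon_{21}p^*(1-q^*)$, whose ratios multiply to force $e=0$ for any interior solution, supplies that computation cleanly, and your edge inspection disposes of the boundary; your criterion $bc<(1-a)(1-d)$ for i)--ii) is an equivalent repackaging of the paper's explicit eigenvalue formulas. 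Second, in the degenerate case $e=0$ the paper evaluates the derivative of the reduced map only at the fixed point, identifies it with $\lambda_1^*=1-(\varepsilon_{11}q^*/p^*+\varepsilon_{21}p^*/q^*)$, and proves $\lambda_1^*>-1$ by convexity of $g$ together with $g(0)=\varepsilon_{12}+\varepsilon_{22}<2$ and $g(1)=\varepsilon_{11}+\varepsilon_{21}<2$; you instead identify $F'=\det J$, note that $\det J$ is affine when $e=0$, and bound it in $(-1,1)$ at the four corners of the square. This buys a genuine contraction on the whole invariant segment, hence existence of the limit and global convergence to the unique solution of the system in iii) --- strictly more than the paper, whose statement and proof are conditional on the limit existing. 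One small point to make explicit: the relation $Jv=(\det J)v$ for $v=(\varepsilon_{11},-\varepsilon_{22})$ follows from the left eigenvector $(\varepsilon_{22},\varepsilon_{11})$ with eigenvalue $1$ only once you know the two eigenvalues are distinct (your corner bound gives $\det J\neq 1$, so this is not circular), or it can be checked by a one-line computation using $\varepsilon_{11}\varepsilon_{21}=\varepsilon_{12}\varepsilon_{22}$; either way the step holds.
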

\begin{proof}
 Notice that the values $(0,0)$ and $(1,1)$ are always a solution of \eqref{Invariant}. The stability of said fixed points (and others we wil determine) can be study by means of the eigenvalues of the Jacobian matrix.
\paragraph{First case: The unbalanced scenario: ($\varepsilon_{11}\varepsilon_{21}\neq\varepsilon_{12}\varepsilon_{22}$)}
If this condition is satisfied, a simple computation shows that there are not non-trivial solutions for \eqref{Invariant}. Hence, if a limit exists, it has to be either the $(0,0)$ or the $(1,1)$. The Jacobian matrix of the system at each of these points is equal to
\[
J_0:=J(0,0)=\begin{pmatrix}
    1-\varepsilon_{12}&\varepsilon_{11}\\
    \varepsilon_{21}&1-\varepsilon_{22}
\end{pmatrix}\mbox{ and }J_1:=J(1,1)=\begin{pmatrix}
    1-\varepsilon_{11}&\varepsilon_{12}\\
    \varepsilon_{22}&1-\varepsilon_{21}
\end{pmatrix}.
\]
The eigenvalues of $J_0$ are then
\[
\lambda^{0}_{1}=\frac{2-(\varepsilon_{12}+\varepsilon_{22})-\sqrt{(\varepsilon_{12}+\varepsilon_{22})^2+4e}}{2} \mbox{ and }\lambda^{0}_{2}=\frac{2-(\varepsilon_{12}+\varepsilon_{22})+\sqrt{(\varepsilon_{12}+\varepsilon_{22})^2+4e}}{2},
\]
while those of $J_1$ are
\[
\lambda^{1}_{1}=\frac{2-(\varepsilon_{11}+\varepsilon_{21})-\sqrt{(\varepsilon_{11}+\varepsilon_{21})^2-4e}}{2} \mbox{ and }\lambda^{1}_{2}=\frac{2-(\varepsilon_{11}+\varepsilon_{21})+\sqrt{(\varepsilon_{11}+\varepsilon_{21})^2-4e}}{2}.
\]
If $e<0$, then $-1<1-(\varepsilon_{12}+\varepsilon_{22})<\lambda^0_1<\lambda^0_2<1$ and $\lambda^1_2>1$, hence $(0,0)$ is stable and $(1,1)$ is unstable. On the other hand, if $e>0$, then $\lambda^0_2>1$ and $-1<1-(\varepsilon_{11}+\varepsilon_{21})<\lambda^1_1<\lambda^1_2<1$, hence $(0,0)$ is unstable and $(1,1)$ is stable.
\paragraph{Second case: The balanced scenario ($\varepsilon_{11}\varepsilon_{21}=\varepsilon_{12}\varepsilon_{22}$)} 
Under this condition, it is straightforward to notice the relation
\[
\varepsilon_{22}p_{k+1}+\varepsilon_{11}q_{k+1}=\varepsilon_{22}p_{k}+\varepsilon_{11}q_{k},\mbox{ for all }k\in\mathbb{N},\]
hence, if a limit $(p^*,q^*)$ exists, it satisfies
\[
r^*:=\varepsilon_{22}p^*+\varepsilon_{11}q^*=\varepsilon_{22}p_{0}+\varepsilon_{11}q_{0}=:r_0.\]
Furthermore, directly from the relation $f_1(p^*,q^*)=p^*$ we get the equality
\[
q^*=\frac{\varepsilon_{12}p^*}{\varepsilon_{11}+(\varepsilon_{12}-\varepsilon_{11})p^*}.
\]
Hence, the value of $(p^*,q^*)$ is given by the unique solution of the system
\begin{equation}
    \left\{\begin{matrix}
        r_0&=&\varepsilon_{22}p^*+\varepsilon_{11}q^*,\\
        &&\\
    q^*&=&\displaystyle{\frac{\varepsilon_{12}p^*}{\varepsilon_{11}+(\varepsilon_{12}-\varepsilon_{11})p^*}}.
    \end{matrix}\right.\label{pstar}
\end{equation}
Computing the Jacobian matrix at $(p^*,q^*)$ gives
\[
J_*:=J(p^*,q^*)=\begin{pmatrix}
    1-\varepsilon_{11}\frac{q^*}{p^*}&\varepsilon_{12}\frac{p^*}{q^*}\\
    \varepsilon_{22}\frac{q^*}{p^*}&1-\varepsilon_{21}\frac{p^*}{q^*}
\end{pmatrix}.
\]
The eigenvalues of $J_*$ are 
\[
\lambda^*_1=1-(\varepsilon_{11}\frac{q^*}{p^*}+\varepsilon_{21}\frac{p^*}{q^*})\mbox{ and }\lambda^*_2=1.
\]
Given that the second eigenvalue is equal to $1$, we cannot immediately give a conclusion to the stability of $(p^*,q^*)$. However, we can proceed as follows: for a fixed $(p_0,q_0)$, $p^*$ is solution of the equation
\begin{align*}
    p^*=&f_1(p^*,\frac{r^*-\varepsilon_{22}p^*}{\varepsilon_{11}})\\
    =&(1-\varepsilon_{12})p^*+(r^*-\varepsilon_{22}p^*)+(\varepsilon_{12}-\varepsilon_{11})p^*\frac{r^*-\varepsilon_{22}p^*}{\varepsilon_{11}}\\
    =&r^*+(1-(\varepsilon_{12}+\varepsilon_{22})+(\varepsilon_{12}-\varepsilon_{11})\frac{r^0}{\varepsilon_{11}})p^*+(\varepsilon_{22}-\varepsilon_{21})(p^*)^2\\
    =:&f(p^*).
\end{align*}
This is, $p^*$ is a fixed point of $f(p)$. Therefore, in order to determine the stability of $(p^*,q^*)$, it suffices to study the value of
\begin{align*}
    f'(p^*)&=(1-(\varepsilon_{12}+\varepsilon_{22})+(\varepsilon_{12}-\varepsilon_{11})\frac{r^0}{\varepsilon_{11}})+2(\varepsilon_{22}-\varepsilon_{21})p^*\\
    &=1-(\varepsilon_{11}\frac{q^*}{p^*}+\varepsilon_{21}\frac{p^*}{q^*}),
\end{align*}
which is precisely the first eigenvalue of $J^*$. Since $\lambda^*_1<1$, $(p^*,q^*)$ will be a stable fixed point if and only if $\lambda^*_1>-1$, or equivalently, if and only if
\[
g(p^*):=\frac{\varepsilon_{11}\varepsilon_{12}}{\varepsilon_{11}+(\varepsilon_{12}-\varepsilon_{11})p^*}+\frac{\varepsilon_{21}}{\varepsilon_{12}}(\varepsilon_{11}+(\varepsilon_{12}-\varepsilon_{11})p^*)=\varepsilon_{11}\frac{q^*}{p^*}+\varepsilon_{21}\frac{p^*}{q^*}<2.
\]
Since $g(p)$ is a convex function over $[0,1]$, which satisfies $g(0)=\varepsilon_{12}+\varepsilon_{22}<2$ and $g(1)=\varepsilon_{11}+\varepsilon_{21}<2$, we conclude $g(p^*)<2$ for all possible values of $p^*$. Therefore $(p^*,q^*)$ is a stable fixed point.
\end{proof}
Let us discuss the results from Proposition \ref{Stability}. There are two scenarios for the unbalanced case. If the players reaction to the lack of cooperation is stronger than the reaction to the presence of it ($e<0$), then both players will eventually adopt the no cooperation strategy, making the average expected gain equal to $0$. On the other hand, two players that are highly responsive to cooperation, and not to the lack of it ($e>0$), will eventually always cooperate, maximising this way the average expected gain.\\
We observe a far more complicated outcome when the responses of both players are balanced ($e=0$). Given that $(p^*,q^*)$ satisfies system \eqref{pstar}, then the average expected gain will increase if $\varepsilon_{11}<\varepsilon_{22}$ and the initial values $p_0$ and $q_0$ satisfy
\[q_0>\frac{\varepsilon_{12}p_0}{\varepsilon_{11}+(\varepsilon_{12}-\varepsilon_{11})p_0},\]
 or if $\varepsilon_{11}>\varepsilon_{22}$ and 
 \[q_0<\frac{\varepsilon_{12}p_0}{\varepsilon_{11}+(\varepsilon_{12}-\varepsilon_{11})p_0}.\]
 Thanks to the balance condition, $\varepsilon_{11}<\varepsilon_{22}$ implies that $\varepsilon_{12}<\varepsilon_{21}$. This is, in a way, player $A$ has more shy responses than player $B$. According to the previously established conditions, interactions between these two players will lead to an increase in the average expected gain only if the initial probability of cooperation for player $B$ is sufficiently big. An analogous interpretation can be given when $\varepsilon_{11}>\varepsilon_{22}$. Figure \ref{fig2} shows several initial configurations for $(p_0,q_0)$ and their respective limiting values satisfying the relation.
 \[
 q^*=\displaystyle{\frac{\varepsilon_{12}p^*}{\varepsilon_{11}+(\varepsilon_{12}-\varepsilon_{11})p^*}}.
 \]
\begin{figure}[H]
    \centering
    \hspace{-2cm}\includegraphics[width=18cm]{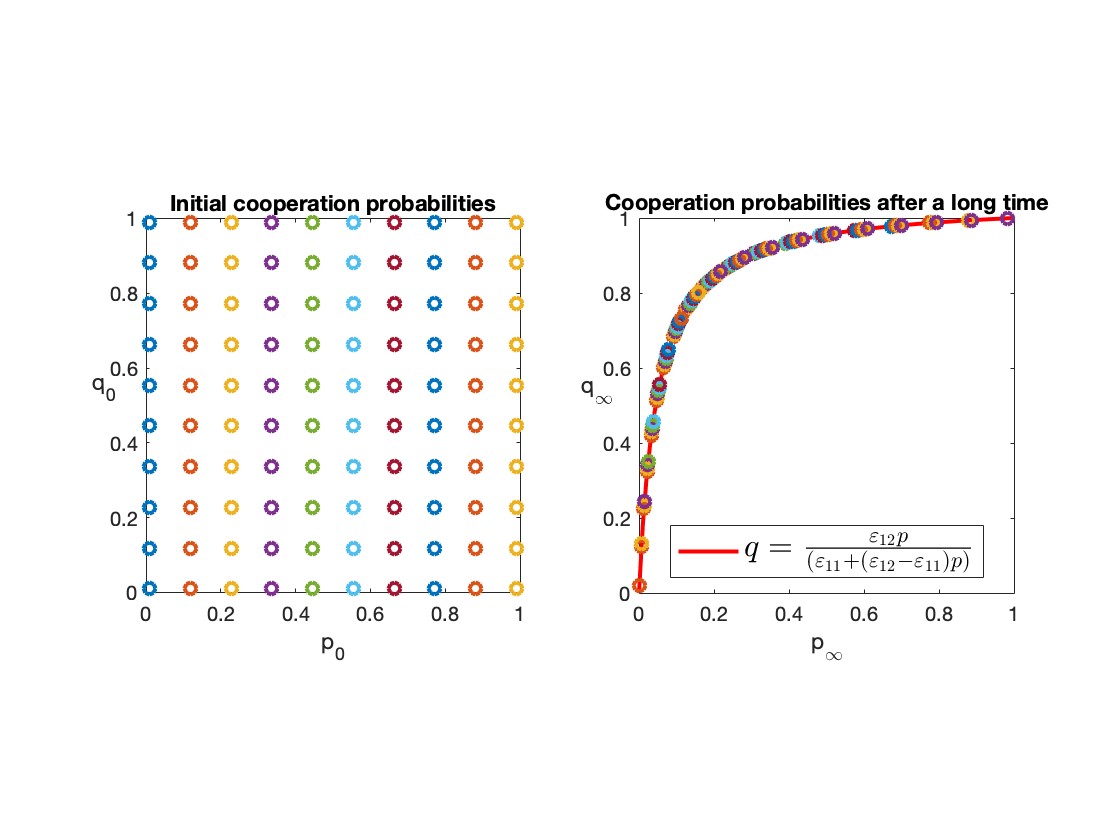}\vspace{-3cm}
    \caption{Left panel: Several initial configurations of cooperation probabilities. Right panel: Limiting values of the sequences $(p_k,q_k)$ associated to initial values showcased on the previous figure.}
    \label{fig2}
\end{figure}
Assume now the presence of $n$ players, each one with an initial probability of cooperation $p^i_0$ and reciprocity constants $(\varepsilon_{i1},\varepsilon_{i2})$, for $i\in\{1,\ldots,n\}$. The previous model can be adapted in such a way that each player modifies its strategy by taking into account the global cooperation level. This is, $p^i_k$ satisfies the relation
\begin{align*}
    p^i_{k+1}&=q^i_k(p_{k}+\varepsilon_{i1}(1-p^i_k))+(1-q^i_k)p^i_{k}(1-\varepsilon_{i2})\\
    &=(1-\varepsilon_{i2})p^i_k+\varepsilon_{i1}q^i_k+(\varepsilon_{i2}-\varepsilon_{i1})p^i_kq^i_k,
\end{align*}
with 
\[
q^i_k:=\frac{1}{n-1}\sum_{j\neq i}p^j_k,
\]
being the average probability of cooperation from the co-players of player $i$. As in the previous case, it can be expected that the amount of fixed points for this recurrence, and its stability will depend on a family of conditions over the values of $(\varepsilon_{i1},\varepsilon_{i2})$, however, for the moment being, we will not study this case any further.\\
An element that was not considered in these models was the effect of the average expected gain on the relation between $(p_k,q_k)$ and $(p_{k+1},q_{k+1})$. For example, considering variable reciprocity coefficients which directly depend on the average expected gain would create a mutual feedback between the cooperation probabilities and the gain, resulting this way in a far more complex, interesting and realistic model.
\subsection{A continuously structured population model for the evolution of cooperation}
Take $p\in[0,1]$ to be a continuous structure variable representing a probability of cooperation. Consider two populations $A$ and $B$, each one composed by individuals with different probabilities of cooperation with the elements on the other population. Let $n_A(t,p)$ and $n_B(t,p)$ be their respective population densities of individuals with probability of cooperation equal to $p$ at time $t$. The total populations at time $t$ are given by
\[
\rho_A(t):=\int_0^1n_A(t,p)dp\mbox{ and }\rho_B(t):=\int_0^1n_B(t,p)dp,
\]
and the mean cooperation probabilities by
\[
\Tilde{p}_A(t):=\frac{\int_0^1p n_A(t,p)dp}{\rho_A(t)}\mbox{ and }\Tilde{p}_B(t):=\frac{\int_0^1p n_B(t,p)dp}{\rho_B(t)}.
\]
These choices allow to define the global expected gain for each population. For the first population its global expected gain is defined then as
\[
E_A(t):=(b-c) \Tilde{p}_A(t) \Tilde{p}_B(t)+b(1-\Tilde{p}_A(t))\Tilde{p}_B(t)-c \Tilde{p}_A(t) (1-\Tilde{p}_B(t))=b\Tilde{p}_B(t)-c\Tilde{p}_A(t),
\]
where $b$ and $c$ are the benefit and cost, respectively, of cooperation in the prisoner's dilemma setting\footnote{For a more general model, the values of $b$ and $c$ could be dependent on $p$, this is, the cost and benefit of cooperation might depend on the probability of cooperation itself}. Similarly, the expected gain for population $B$ is given by
\[
E_B(t):=b\Tilde{p}_A(t)-c\Tilde{p}_B(t).
\]
This way, we may consider that the population densities evolve following the system of equations
\begin{equation}
    \left\{
    \begin{matrix}
        \partial_tn_A(t,p)+\varepsilon_A\partial_p\left((\Tilde{p}_B(t)-p)n_A(t,p)\right)=&g_A(p,E_A(t))n_A(t,p),\\
        &\\
        \partial_tn_B(t,p)+\varepsilon_B\partial_p\left((\Tilde{p}_A(t)-p)n_B(t,p)\right)=&g_B(p,E_B(t))n_B(t,p),\\
        &\\
        n_A(0,p)=n^0_A(p),\quad n_B(0,p)=n^0_B(p),
    \end{matrix}
    \right.\label{EvolCoop}
\end{equation}
where $\varepsilon_A$ and $\varepsilon_B$ are reciprocity coefficients and $g_A,g_B$ are continuous and increasing functions of $E_A$ and $E_B$ respectively, while the elements of both populations modify their probabilities of cooperation, depending on the global probability of cooperation of their counterpart.\\
There exists a formal link between the discrete model given in section \ref{Toy} and the partial differential equation system \eqref{EvolCoop} or some generalisation of it. Picture two populations, each one conformed by individuals that modify their probability of cooperation following the dynamics described by system \eqref{EvolCoop} by reciprocating the mean probability of cooperation of the opposing population. If we endow the system with appropriately chosen growth and death rates, that may be intrinsic or may depend on the global gain, it could be expected that, after taking the limit for a big number of individuals and a small time interval, the differential system of equations satisfied by the densities of population will resemble \eqref{EvolCoop}. We intend to study in depth this relation between the two models, discrete and continuous, in future work.\\
\paragraph{Cooperation or extinction, an easy choice} From model \eqref{EvolCoop}, we will illustrate, for an specific choice of $g_A$ and $g_B$, how cooperation may make a difference between extinction or persistence. Consider 
\begin{equation}
    \begin{matrix}
        g_A(p,E_A(t))&:=r_A(p)+\gamma_A(p)E_A(t)=r_A(p)+\gamma_A(p)(b\Tilde{p}_B(t)-c\Tilde{p}_A(t)),\\
    g_B(p,E_B(t))&:=r_B(p)+\gamma_B(p)E_B(t)=r_B(p)+\gamma_B(p)(b\Tilde{p}_A(t)-c\Tilde{p}_B(t)),
    \end{matrix}
\end{equation}
where $r_A(p),r_B(p)$ are the respective intrinsic growth rates of populations $A$ and $B$ and the non-negative functions $\gamma_A(p),\gamma_B(p)$ represent the effect of the expected gain on the growth rate of each population. This choice of $g_A$ and $g_B$ makes system \eqref{EvolCoop} bear a striking resemblance to the model studied in \cite{PoucholTrelat}, where conditions under which there is persistence of all species are given. Nevertheless, there are several differences: In our case the non local terms are given by the mean cooperation probabilities, the functions $\gamma_A$ and $\gamma_B$ are non-negative and we consider no restrictions over the signs of $r_A(p)$ and $r_B(p)$. Despite these differences, we do not rule out the fact that the tools and techniques used within the cited reference may be useful for the study of problem \eqref{EvolCoop} as well. For specific choices of $\varepsilon_A$, $\varepsilon_B$, $\gamma_A$ and $\gamma_B$ it is possible to identify the conditions over $r_A$, $r_B$, $b$ and $c$ which guarantee that one or both populations will either go extinct or proliferate. Such conditions are stated on the following proposition:
\begin{prop}\label{EorP}
    Consider $\varepsilon_A=\varepsilon_B=0$, $\gamma_A(p)\equiv \gamma_A$ and $\gamma_B(p)\equiv \gamma_B$, with $\gamma_A,\gamma_B$ non negative  constants. Suppose $r_A(p)$, $r_B(p)$, $n^0_A(p)$ and $n^0_B(p)$ to be continuous functions such that the maximum value of $r_A(p)$ over the support of $n^0_A(p)$ is attained at a single point $p^*_A$, and the maximum value of $r_B(p)$ over the support of $n^0_B(p)$ is attained at a single point $p^*_A$. Then
    \begin{itemize}
        \item[i)] If $r_A(p^*_A)+\gamma_A(bp^*_B-cp^*_A)<0$, population $A$ will go extinct.
        \item [ii)] If $r_A(p^*_A)+\gamma_A(bp^*_B-cp^*_A)>0$, there exists and interval $I$ satisfying $p^*_A\in I\subset [0,1]$ such that population $A$ will blow up for all $p\in I$.
        \item[iii)] The same is true for population $B$, depending on the sign of $r_B(p^*_B)+\gamma_B(bp^*_A-cp^*_B)$.
    \end{itemize}
\end{prop}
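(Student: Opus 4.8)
The plan is to exploit the fact that, once $\varepsilon_A=\varepsilon_B=0$ and the sensitivities $\gamma_A,\gamma_B$ are constant in $p$, the system \eqref{EvolCoop} loses its transport term and reduces, at each fixed $p$, to a scalar linear ODE in time with growth rate $r_A(p)+\gamma_A E_A(t)$. First I would integrate this ODE explicitly to obtain
\[
n_A(t,p)=n^0_A(p)\exp\!\Big(r_A(p)\,t+\gamma_A\!\int_0^t E_A(s)\,ds\Big),
\]
and the analogous formula for $n_B$. The decisive structural observation is that the feedback contribution $\gamma_A\int_0^t E_A(s)\,ds$ does not depend on $p$, so it factors out of both numerator and denominator in the definition of the mean $\tilde p_A(t)$ and cancels. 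At the outset I would also record that the masses stay strictly positive for all $t$ (since $n^0_A\ge 0$, $\not\equiv 0$ and the exponential is positive), so the means are well defined and the reduced dynamics is globally defined, with ``blow up'' understood as divergence as $t\to\infty$ rather than in finite time.

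Consequently the mean cooperation probabilities decouple entirely from the gain feedback: they are given by the explicit Gibbs-type ratios $\tilde p_A(t)=\big(\int_0^1 p\,n^0_A(p)e^{r_A(p)t}\,dp\big)\big/\big(\int_0^1 n^0_A(p)e^{r_A(p)t}\,dp\big)$, and likewise for $B$. This removes the apparent circularity of the nonlocal coupling. Applying Laplace's method, together with the hypothesis that $r_A$ attains its maximum over $\mathrm{supp}\,n^0_A$ at the single point $p^*_A$ (and $r_B$ at the single point $p^*_B$), I would show $\tilde p_A(t)\to p^*_A$ and $\tilde p_B(t)\to p^*_B$. Hence $E_A(t)=b\tilde p_B(t)-c\tilde p_A(t)\to bp^*_B-cp^*_A$, and since a convergent function has a time average converging to the same limit, $\tfrac1t\int_0^t E_A(s)\,ds\to bp^*_B-cp^*_A$.

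Substituting back into the solution formula, the exponent at a point $p$ behaves like $\big(r_A(p)+\gamma_A(bp^*_B-cp^*_A)\big)t+o(t)$. For part (i), if $r_A(p^*_A)+\gamma_A(bp^*_B-cp^*_A)<0$ then, since $r_A(p)\le r_A(p^*_A)$ throughout $\mathrm{supp}\,n^0_A$, the effective rate is negative everywhere on the support, so $n_A(t,p)\to 0$ pointwise and $\rho_A(t)\to 0$: extinction. For part (ii), if $r_A(p^*_A)+\gamma_A(bp^*_B-cp^*_A)>0$, continuity of $r_A$ yields an interval $I\ni p^*_A$ on which $r_A(p)+\gamma_A(bp^*_B-cp^*_A)>0$; there the exponent tends to $+\infty$, so $n_A(t,p)\to\infty$ at each $p\in I$ lying in $\mathrm{supp}\,n^0_A$, and in particular $\rho_A(t)\to\infty$. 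Part (iii) follows by the symmetric argument applied to $B$.

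The main obstacle is the asymptotic analysis underlying $\tilde p_A(t)\to p^*_A$: one must control the Laplace concentration carefully enough to obtain genuine convergence of the \emph{mean}, not merely of the mode, which is exactly where the single-point-maximum hypothesis is indispensable, since a flat or multiply attained maximum would leave the limit undetermined or displaced and would break the clean threshold in the statement. A secondary technical point is to confirm that the sub-exponential Laplace prefactors, being $o(t)$ after taking logarithms, cannot alter the sign of the limiting growth exponent; this is harmless but should be stated explicitly. Everything else is an elementary consequence of the explicit solution formula made available by the vanishing advection.
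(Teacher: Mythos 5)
Your proposal follows essentially the same route as the paper's proof: explicit exponential solution at each fixed $p$, cancellation of the $p$-independent feedback integral in the means $\tilde p_A,\tilde p_B$, Laplace-type concentration giving $\tilde p_A(t)\to p^*_A$ and $\tilde p_B(t)\to p^*_B$, and then a sign comparison of the effective growth rate (the paper bounds the instantaneous rate for $t>T$ where you use the Ces\`aro limit of $\int_0^t E_A$, but these are interchangeable). The argument is correct and matches the paper's, including leaving the concentration of the mean as the one step requiring the single-maximum hypothesis.
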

\begin{proof}
    Under these hypotheses, the expression for $n_A(p)$ and $n_B(p)$ are implicitly given by the expressions
    \[n_A(t,p)=n^A_0(p)e^{r_A(p)t+\gamma_A\int_0^tE_A(s)ds}\mbox{ and }n_B(t,p)=n^B_0(p)e^{r_B(p)t+\gamma_B\int_0^tE_B(s)ds},\]
    respectively. This allows to explicitly compute the values of $\Tilde{p}_A(t)$ and $\Tilde{p}_B(t)$:
    \[
    \Tilde{p}_A(t)=\frac{\int_0^1pn^0_A(p)e^{r_A(p)t}dp}{\int_0^1n^0_A(p)e^{r_A(p)t}dp}\mbox{ and }\Tilde{p}_B(t)=\frac{\int_0^1pn^0_A(p)e^{r_B(p)t}dp}{\int_0^1n^0_A(p)e^{r_B(p)t}dp}.
    \]
    From here, it is not hard to prove that, under the hypotheses of Proposition \ref{EorP}, $\Tilde{p}_A(t)$ and $\Tilde{p}_B(t)$ converge towards $p^*_A$ and $p^*_B$ respectively. This implies that, for all positive $\varepsilon$ there exists $T>0$ such that
    $r(p)+\gamma_A(b\Tilde{p}_B(t)-c\Tilde{p}_A(t))\leqslant r_A(p^*_A)+\gamma_A(bp^*_B-cp^*_A)+\varepsilon$ for all $t>T$. If $\varepsilon$ is chosen small enough, then $r(p)+\gamma_A(b\Tilde{p}^*_B(t)-c\Tilde{p}^*_A(t))<0$ for all $t>T$ which gives the convergence to $0$ of the population. Conversely, if $r_A(p^*_A)+\gamma_A(bp^*_B-cp^*_A)>0$, we set $\delta:=\frac{r_A(p^*_A)+\gamma_A(bp^*_B-cp^*_A)}{2}$, and define \[I=\{p\in[0,1]:r_A(p)>r_A(p^*_A)-\delta\}.\]
    Hence, for all $p\in I$ there exists $T>0$ such that
    \[r(p)+\gamma_A(b\Tilde{p}_B(t)-c\Tilde{p}_A(t))\geqslant r_A(p^*_A)-\delta +\gamma_A(b\Tilde{p}_B(t)-c\Tilde{p}_A(t))-\varepsilon=\delta-\varepsilon, \]
    for all $t>T$. Once again, by choosing $\varepsilon$ small enough we obtain the strictly positive growth rate for all values of $p\in I$, which implies the blow up of the population for all such values of $p$.\\
    The proof for population $B$ is analogous.
\end{proof}

Let us illustrate the result of Proposition \ref{EorP} with an example. Consider
\[
r_A(p)=r_B(p)=p(1-p)-\frac{1}{2}<0.
\]
It is straightforward to conclude that, if there is no cooperation ($\gamma_A(p)=\gamma_B(p)=0$ or $n^A_0(p)=n^B_0(p)=\rho_0\delta_0(p)$) then both populations will go extinct, at an exponential rate. On the other hand, consider $\gamma_A(p)=\gamma_B(p)=1$, $n^A_0(p)\equiv n^A_0$ and $n^B_0(p)\equiv n^B_0$. Under these assumptions, we have
\[
n_A(t,p)=n^A_0e^{r_A(p)t+\int_0^tE_A(s)ds}\mbox{ and }n_B(t,p)=n^B_0e^{r_B(p)t+\int_0^tE_B(s)ds},
\]
and consequently we get
\[
\Tilde{p}_A(t)=\frac{\int_0^1pe^{r_A(p)t}dp}{\int_0^1e^{r_A(p)t}dp}=\frac{1}{2}\mbox{ and }\Tilde{p}_B(t)=\frac{\int_0^1pe^{r_B(p)t}dp}{\int_0^1e^{r_B(p)t}dp}=\frac{1}{2},
\]
after integrating by means of a substitution. This way, the equations for $n_A(t,p)$ and $n_B(t,p)$ are reduced to
\begin{equation*}
    \left\{
    \begin{matrix}
        &\partial_tn_A(t,p)=(r_A(p)+\frac{(b-c)}{2})n_A(t,p),\\
        &\\
        &\partial_tn_B(t,p)=(r_B(p)+\frac{(b-c)}{2})n_B(t,p),\\
        &\\
       & n_A(0,p)=n^0_A,\quad n_B(0,p)=n^0_B.
    \end{matrix}
    \right.
\end{equation*}
It is then evident that, as long as $(b-c)>1$ there will be values of $p$ for which $r_A(p)+\frac{(b-c)}{2}>0$ and $r_B(p)+\frac{(b-c)}{2}>0$, hence, the population densities $n_A(t,p)$ and $n_B(t,p)$ will be proliferating exponentially.\\
An interesting question left unanswered is the case $r_A(p^*_A)+\gamma_A(bp^*_B-cp^*_A)=0$. In this scenario, additional conditions over the parameters of the problem might be needed in the general case in order to determine the behaviour of the solution. For the previous illustrative example, this condition is equivalent to choosing $b-c=\frac{1}{2}$, which leads to a solution which decreases  for all $p\neq 1/2$ and that remains constant for $p=1/2$.\\
When $\gamma_A(p)$ and $\gamma_B(p)$ are constant, the conditions determining the fate, blow-up or extinction, of the population depends on the values of $p$ maximising the intrinsic growth rates $r_A(p)$ and $r_B(p)$. Furthermore, we showed that the mean cooperation probabilities of both populations converge to said values of $p$, respectively. This last property reminisces of the concentration result shown in Section 2.1 of \cite{PerthBook} where, for a constant death rate, the density of the structured population converges to a Dirac delta centred around the value that maximises the growth rate. When said death rate is not constant, it has been already proved in \cite{PoucholTrelat} that the concentration phenomena still occurs, but around the value maximising the fitness function defined as the quotient between the growth and death rates. One could conjecture that, in our case, for non constant choices of $\gamma_A(p)$ and $\gamma_B(p)$, the mean probabilities of cooperation will still converge towards the values maximising a conveniently defined fitness function. Additionally, if $\varepsilon_A$ and $\varepsilon_B$ are non-zero, the long time behaviour is not that clear, but we must remark that, for similar models containing advection terms, the theoretical results from \cite{Guilberteau} and numerical evidence in \cite{ACC2022} suggest that concentration phenomena around values of $p$ depending on the drift could also be expected.\\
A diffusion term can be considered as well in both equations of system \eqref{EvolCoop} in order to model random instabilities of the probabilities of cooperation. This term may be a second order differential operator and suitable boundary conditions, or an integral term with a mutation kernel. A similar model, for only one population, excluding the advection term and depending on the population size as the non-local term was already studied in \cite{LorenziPouchol}. Finally, if all three terms are considered, the resulting model will follow the same principles as in model \eqref{PheD}, where the diffusion term represents the non-genetic instability of trait $p$, the advection term represents the external stress exerted over each population as in \cite{ACC2022}  or the existence of a bias in the direction of epimutations, as in \cite{Chisholm2016} (in our case such stress or bias is prompted by the global cooperation probability of the other population)  and the reaction term accounts for selection mechanisms.It is worth mentioning that a particle method allowing for the numerical approximation of solutions for models accounting for the three previously mentioned mechanisms was recently developed in \cite{AlvarezGuilberteau}.

\section{Conclusion}

We have sketched in this short essay, relying on concepts of philosophy of science and on  mathematical models under development, the two settings of evolution in which phenotype divergence and cooperation between phenotypes in the constitution of animal multicellularity should be considered from our point of view. They are the billion-year Darwinian evolution of species - which we assimilate with the evolution of body plans - and the short-term construction, in embryogenesis and development, of an isogenic animal from the zygote to the constituted, terminally differentiated multicellular organism.

In the first case, phenotype divergence is considered to be determined by changes in the environment, and it is represented by an advection term in a PDE, yielding different optimal adaptive strategies that are chosen randomly in the initial body plan and resulting in (at least) two different body plans, that in the first place should be reversible, before being fixed by stabilising mutations.

In the second case, the body plan of a given coherent multicellular animal, that has been established in Darwinian evolution in a deterministic machinery of embryogenesis and organism maintenance, governs the process of development from the zygote of the animal individual on  principles of compatibility and cooperativity between physiological functions, organs and tissues, that relies on cell differentiations. Of note, cellular stress-induced genes might evolve into developmental organisers, according to a mechanism proposed in the Chlamydomonas/Volvox lineage~\cite{Konig2020}. Such differentiations are by nature theoretically reversible, relying on epigenetic enzyme activities which graft methyl or acetyl radicals on the DNA or on the histones that constitute the genome on animal, and dedifferentiations indeed have been shown to be experimentally possible in 2006 by Takahashi and Yamanaka~\cite{Yamanaka2006}. However, they are physiologically excluded, except in particular situations such as wound healing, by a strict 
control of the expression of these epigenetic enzymes. Plasticity in cancer cells alters such normal organismic control.

In cancer, which is a disease characteristic of multicellular animals, differentiations are (locally, in the tissue from which it originates) out of organismic control, so that tumours, as poorly organised cell colonies that nevertheless are made of cells bearing in each one of them the body plan of a multicellular organism, can reactivate a process of phenotype divergence in response to a deadly insult (such as a chemotherapy at high doses), resulting in cancer bet hedging, i.e., developing diverse transient (reversible) phenotypes without organised control, with the goal to preserve the proliferation potential of their cells.

We are aware that the mathematical models presented here are sketches that need refinement, and that in particular the cooperativity part should be oriented towards defining a compulsory common gain (likely represented by, again, an advection term in a  PDE) that determines the precise construction of an individual animal organism designed by its body plan. Much still remains to be done towards this goal, and in particular the body plan - whose effects are patent in embryogenesis and development, but is still not properly defined as a programme - needs to be better defined in a mathematical representation. It is likely made of an organised ensemble of gene regulatory networks, as evidenced in the works of Eric Davidson~\cite{Davidson1995} and his colleagues, and systematically described in the diversity of its functions in hypothetical {\it Urmetazoa} by W.E.G. M\"uller and his colleagues~\cite{MulleretalIntRevCytol2004}. A mathematical representation of the body plan, as a programme of construction of the individual and as the evolutionary unit on which relies Darwinian evolution and the design of animal anatomy and physiology, is a challenge that awaits philosophers, evolutionary biologists, and mathematical modellers and analysts, a challenge we have merely sketched in this short essay.

\end{document}